%% Template for writing an abstract for the Oberwolfach Reports
%% maintained by Oliver Wienand <reports at mfo dot de>
%% last change on October 18th, 2005

%% Before submitting the report to the reporter
%% you can run automated tests for common errors online
%% on the websites of the institute.
%% For further informations and the online test visit:
%% http://www.mfo.de/cgi-bin/path?publications/owr/diagnostics.html

\documentclass{owrart}

\usepackage{graphicx,amssymb,amsfonts,amsmath,amsthm,newlfont}
\usepackage{epsfig}
% ----------------------------------------------------------------
\vfuzz2pt % Don't report over-full v-boxes if over-edge is small
\hfuzz2pt % Don't report over-full h-boxes if over-edge is small
% THEOREMS -------------------------------------------------------
\newtheorem{thm}{Theorem}%[section]

\theoremstyle{definition}

\newtheorem{defns}[thm]{Definitions}
\newtheorem{conj}[thm]{Conjecture}

\newtheorem*{claim*}{Claim}
\theoremstyle{remark}

%\numberwithin{equation}{section}

% MATH -----------------------------------------------------------

\font \rus= wncyr10

\newcommand{\C}{\mathbb C}

\newcommand{\R}{\mathbb R}

\newcommand{\Q}{\mathbb Q}

\newcommand{\sha}{\, \hbox{\rus x} \,}

\newcommand{\Mod}{\mathfrak{M}}
\newcommand{\M}{\overline{\mathfrak{M}}}

\newcommand{\Sym}{\mathfrak{S}}

%% ----------------------------------------------------------------------------
%% Enter additionally required packages below this comment.
%% Please be conservative and only require packages
%% which are commonly available.
%%
%% For the inclusion of graphics, .eps files and the use
%% of the psfig, epsfig, graphics or graphicx packages
%% are preferred.
%% Please avoid including pixel-based graphics formats
%% (such as .gif, .png, .jpg) and the pdf format.
%% ----------------------------------------------------------------------------
\begin{document}
\noindent Oberwolfach Reports\\
Volume 4, Issue 2, Report 26 (2007)\\
pp. 1495-1497

\vspace{1cm}
\begin{talk}[Francis Brown, Leila Schneps]{Sarah Carr}{Periods on the
    moduli space of genus 0 curves}

\noindent  A recent theorem in the thesis of Francis Brown proves that
any period
over a connected component of
the real part of $\Mod_{0,n}(\C)$ is a $\Q$ linear
combination of
multizeta values.
By
studying the cohomology and geometry of
$\Mod_{0,n}(\C)=\Mod_{0,n}$, we have found a method to formally
represent these periods
as linear combinations of pairs of $n$-polygons, one
of which represents a connected component, or {\it cell}, of the real part of
$\Mod_{0,n}$, and the
other a certain differential form which we call a {\it cell form}.  These
pairs of polygons form an algebra 
for the shuffle product.
In this talk, we will outline the combinatorial structure of this algebra.  As
consequences, we obtain an explicit basis for the cohomology
group, $H^{n-3}(\Mod_{0,n}^\delta)$, of differential forms converging on the
boundary divisors which bound standard associahedron, $\delta$, and
hence a new method for studying
multizeta values.

We denote by $(0,t_1,...,t_{n-3},1,\infty)$ a point in $\Mod_{0,n}$
and by $\Mod_{0,n}(\R)\subset
\Mod_{0,n}$ the points whose marked points are in $\R$.
We can identify an oriented $n$-gon to a connected component, or {\it
  cell}, in $\Mod_{0,n}(\R)$ by
labelling the $n$-gon with the marked points.  This $n$-gon is
associated to the
cell given by the clockwise cyclic ordering of the
labelled edges of the polygon.  For example, a polygon cyclically labelled
$(0,t_1,t_3,1,t_2,\infty)$ is identified with the cell
$0<t_1<t_3<1<t_2<\infty$ in $\Mod_{0,n}(\R)$.

Similarly we can associate an $n$-gon labelled by the marked points
to a differential $(n-3)$-form which we call a {\it cell form}.  A cell
form is defined as
$dt_1\wedge ... \wedge dt_{n-3}/\Pi
(s_i-s_{i-1})$, where the $s_i$ are the cyclically labelled sides of
the polygon. 
We leave the side labelled $\infty$ out of the product.  For example,
the polygon cyclically
labelled $[0,1,t_1,t_3,\infty,t_2]$ gives the cell form $dt_1dt_2dt_3
/((t_1-1)(t_3-t_1) (-t_2))$.

We consider a {\it period} on $\Mod_{0,n}$ to be convergent
integral, over a connected component in $\Mod_{0,n}(\R)$, of a
differential form which is holomorphic on the interior of $\Mod_{0,n}$
and which
has at most logarithmic singularities on $\M_{0,n}\setminus \Mod_{0,n}$. 
Up to a variable change corresponding to permuting the marked points,
all periods can be written as integrals over the standard cell,
$\delta := 0<t_1<...<t_{n-3}<1$.

According to the above definitions, we can associate a pair of
polygons to a cell and a cell form.  Therefore, we have a map from
pairs of $n$-gons to periods (and divergent integrals) given by
mapping the pair to the integral of the cell form over the cell.
This association and Brown's thesis have allowed us to prove some
results and approach 
some conjectures about multizeta values and formal multizeta values.

\begin{thm} The {\it 01-cell forms} given by polygons
  $[...,0,1,...,\infty]$ form 
  a basis for
  $H^{n-3}(\Mod_{0,n})$ of differential $(n-3)$-forms convergent on
  the interior of $\Mod_{0,n}$ and with at most logarithmic
  singularities on the boundary divisors, $\M_{0,n}\setminus \Mod_{0,n}$.
\end{thm}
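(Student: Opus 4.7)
The plan has three parts: (i) check that every 01-cell form defines a class in $H^{n-3}(\Mod_{0,n})$, (ii) match the count of 01-cell forms to $\dim H^{n-3}(\Mod_{0,n})$, and (iii) establish linear independence. The first two are essentially routine, and the third is the heart of the proof.

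For (i), the only poles of a cell form are along divisors coming from pairs of labels adjacent in the polygon; these are simple poles along boundary divisors of $\M_{0,n}$, so the form is logarithmic on the boundary. The specific 01-shape $[\ldots,0,1,\ldots,\infty]$ ensures that after the Deligne--Mumford compactification no extra pole arises near the ``$\infty$'' divisor. For (ii), the Poincar\'e polynomial of the hyperplane complement $\Mod_{0,n}$ is $\prod_{k=2}^{n-2}(1+kt)$, giving $\dim H^{n-3}(\Mod_{0,n})=(n-2)!$. On the combinatorial side, fixing $\infty$ at a vertex to break rotational symmetry, the consecutive pair $(0,1)$ can be placed in $n-2$ positions and the remaining $t_i$'s fill the other vertices in $(n-3)!$ ways, yielding $(n-2)!$ polygons of the required shape.

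For (iii), I would proceed by induction on $n$ using the forgetful fibration $\pi:\Mod_{0,n}\to\Mod_{0,n-1}$ dropping the marked point $t_{n-3}$. The fibre is $\Pro^1$ minus the $n-1$ remaining marked points, whose $H^1$ has dimension $n-2$ and is spanned by the logarithmic forms $dt_{n-3}/(t_{n-3}-s)$. The Leray spectral sequence degenerates (as reflected by the multiplicative factorisation of the Poincar\'e polynomial), giving an isomorphism $H^{n-3}(\Mod_{0,n})\cong H^1(\mathrm{fibre})\otimes H^{n-4}(\Mod_{0,n-1})$. The task is then to match 01-cell forms on $\Mod_{0,n}$ with pairs (fibre basis form, inductive 01-basis form on the base), and this match is produced by a partial-fraction expansion applied to the two denominator factors in the polygon adjacent to $t_{n-3}$.

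The main obstacle is the bijectivity in step (iii). After removing $t_{n-3}$, the partial-fraction expansion of the denominator factors adjacent to $t_{n-3}$ generically involves several polygons on $n-1$ vertices, and conversely the same base form can arise from several polygons upstairs. Controlling the resulting change-of-basis matrix and showing it is invertible---for instance by exhibiting a term order for which it is upper-triangular with nonzero diagonal, compatibly with the associahedron structure $\delta$ mentioned in the text---is where I would expect the bulk of the combinatorial work to lie.
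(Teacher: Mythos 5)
Your steps (i) and (ii) are fine: every cell form has at most logarithmic poles along $\M_{0,n}\setminus\Mod_{0,n}$ (the 01-shape is not actually what guarantees this --- all $(n-1)!$ cell forms define classes in $H^{n-3}(\Mod_{0,n})$; the 01-condition merely cuts the collection down to the right cardinality), and the count $(n-2)!$ correctly matches the top coefficient of the Poincar\'e polynomial $\prod_{k=2}^{n-2}(1+kt)$. The genuine gap is step (iii), and you have in effect flagged it yourself: the entire content of the theorem is the invertibility of the change-of-basis matrix between 01-cell forms and the basis produced by the fibration, and your argument stops precisely at ``showing it is invertible \dots is where I would expect the bulk of the combinatorial work to lie.'' The difficulty is not cosmetic: removing $t_{n-3}$ from $[\ldots,a,t_{n-3},b,\ldots]$ via
$$\frac{1}{(t_{n-3}-a)(b-t_{n-3})}=\frac{1}{b-a}\Bigl(\frac{1}{t_{n-3}-a}+\frac{1}{b-t_{n-3}}\Bigr)$$
sends each polygon to a \emph{sum} of two fibre classes with opposite signs times a contracted polygon downstairs, the same pair (fibre class, contracted polygon) arises from several polygons upstairs, and no term order making this triangular is exhibited. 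As written, this is a plan with the decisive lemma missing.

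The paper closes the argument from the other side, and more cheaply: it does not prove linear independence at all, but instead expresses each element of Arnol'd's basis $\bigwedge_{i=1}^{n-3}\frac{dt_i}{t_i-a_i}$, $a_i\in\{0,1,t_1,\ldots,t_{i-1}\}$, as an explicit linear combination of 01-cell forms. This shows the 01-cell forms span $H^{n-3}(\Mod_{0,n})$, and since there are exactly $(n-2)!$ of them --- your count in step (ii) --- spanning already forces them to be a basis. Note that your fibration argument, via Leray--Hirsch applied to the global classes $\frac{dt_{n-3}}{t_{n-3}-s}$, essentially reconstructs the Arnol'd basis by induction, so the two approaches converge on the same combinatorial question; the paper's choice of proving surjectivity rather than injectivity is what lets it avoid controlling an inverse matrix. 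The most economical repair of your proposal is therefore to replace step (iii) by an inductive expansion of Arnol'd (or arbitrary cell) forms in terms of 01-cell forms, keeping your dimension count as the final step.
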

To prove this, it was enough to express Arnol'd's well-known basis
in terms of 01-cell forms.

\begin{defns}
Let ${\cal P}_n$ be the vector space generated by oriented $n$-gons
decorated by the marked points in $\Mod_{0,n}$.

Recall that the shuffle product of lists $A$ and $B$ is defined as 
$$A\sha B = \sum_{\sigma\in \Sym} \sigma(A\cdot B),$$
where $\sigma$ runs over the permutations of the concatenation of $A$
and $B$ such that the orders of $A$ and $B$ are preserved.

Let $I_n\subset {\cal
  P}_n$ be the vector subspace generated by shuffle sums with respect
to $\infty$, in other words polygon sums of the form $$\sum_{W\in A\sha
  B} [W,\infty]$$
where $A$, $B$ is a partition of $\{ 0,t_1,...,t_{n-3},1\}$.
\end{defns}

\begin{thm}
${\cal P}_n/I_n$ is isomorphic to $H^{n-3}(\Mod_{0,n})$. 
\end{thm}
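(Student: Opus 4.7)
The plan is to construct a surjective linear map $\phi\colon\mathcal{P}_n\to H^{n-3}(\Mod_{0,n})$ sending a decorated $n$-gon $P$ to the cohomology class of its cell form $\omega_P$, and to identify its kernel with $I_n$. Surjectivity is automatic: by Theorem~1 the images of the $01$-polygons already form a basis of $H^{n-3}(\Mod_{0,n})$, which has dimension $(n-2)!$ (equal to the number of $01$-polygons).

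For the inclusion $I_n\subseteq\ker\phi$, each cell form $\omega_{[W,\infty]}$ carries the common top-form $dt_1\wedge\cdots\wedge dt_{n-3}$, so the claim reduces to the scalar identity
$$\sum_{W\in A\sha B}\prod_{i=1}^{|W|-1}\frac{1}{w_{i+1}-w_i}=0$$
for every partition of $\{0,t_1,\dots,t_{n-3},1\}$ into two nonempty ordered sequences $A,B$. This is a classical partial-fraction (telescoping) identity; the quickest proof I would give proceeds by induction on $\min(|A|,|B|)$, the case $|A|=1$ reducing to repeated use of the identity $\frac{1}{(a-b)(c-a)}=\frac{1}{c-b}\bigl(\frac{1}{a-b}+\frac{1}{c-a}\bigr)$ applied as the single letter of $A$ is inserted at each position inside $B$ and the resulting fractions telescope.

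The substantive step is the reverse inclusion $\ker\phi\subseteq I_n$. I would establish it by showing that modulo $I_n$ every polygon is a $\Q$-linear combination of $01$-polygons, by induction on the cyclic distance $d(P)$ between the labels $0$ and $1$ in $P$. When $d(P)>1$, choose a shuffle partition adapted to an arc separating $0$ from $1$ -- for instance, take $A$ to consist of $0$ together with the letters strictly between $0$ and $1$ along one arc, and $B$ to consist of $1$ together with the remaining non-$\infty$ letters along the other arc -- so that $P$ appears in the resulting shuffle relation $\sum_{W\in A\sha B}[W,\infty]\equiv 0\pmod{I_n}$ as a single distinguished term and every other term has strictly smaller $d$. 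Iterating expresses every polygon as an $01$-combination modulo $I_n$. Since Theorem~1 already supplies $(n-2)!$ linearly independent classes $\phi([\ldots,0,1,\ldots,\infty])$, these must remain independent in $\mathcal{P}_n/I_n$, so they descend to a basis and $\phi$ to an isomorphism.

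The main obstacle is the combinatorial reduction above: one must specify, for each non-$01$ polygon, a shuffle partition whose shuffle relation isolates that polygon with coefficient $1$ while keeping every other term strictly simpler in a well-founded sense, and verify that the process terminates. If this direct procedure turns out to be delicate, a safer fallback is a pure dimension count -- exhibit any spanning set of $\mathcal{P}_n/I_n$ of size at most $(n-2)!$ and match it against the lower bound $(n-2)!$ already coming from $\phi$ and Theorem~1.
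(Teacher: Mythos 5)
Your overall architecture is the same as the paper's: the surjection $\phi\colon{\cal P}_n\twoheadrightarrow H^{n-3}(\Mod_{0,n})$ coming from Theorem 1, a partial-fraction computation showing $I_n\subseteq\ker\phi$, and a dimension bound $\dim {\cal P}_n/I_n\le (n-2)!$ to conclude. The first two steps are fine (your telescoping identity is exactly the ``calculation on rational functions'' the paper alludes to). The gap is in the one step you work out in detail, the spanning argument. The induction on the cyclic distance $d(P)$ between $0$ and $1$ does not close with the shuffle partition you prescribe, because the other terms of the shuffle relation need not have strictly smaller $d$. Concretely, for $n=6$ take $P=[0,t_1,1,t_2,t_3,\infty]$, so $d(P)=2$, $A=(0,t_1)$, $B=(1,t_2,t_3)$. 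The set $A\sha B$ contains the word $(1,t_2,0,t_1,t_3)$, and the polygon $[1,t_2,0,t_1,t_3,\infty]$ again has $d=2$; choosing $A$ along the other arc is no better, since $(0,t_2,t_1,1,t_3)$ has $d=3$. Moreover your stated fallback is not actually independent of this step: ``exhibit any spanning set of size at most $(n-2)!$'' is precisely what the broken induction was supposed to deliver.

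The repair is to aim for a simpler spanning set than the $01$-polygons. Write each polygon as $[w,\infty]$ with $w=u\,0\,v$ and $u$ nonempty, and apply the shuffle relation for the partition $A=u$, $B=(0,v)$. In a shuffle of $u$ with $(0,v)$, the letter $0$ sits in position $|u|+1$ only when all of $u$ precedes it, which forces the word to be $w$ itself; every other term has $0$ strictly closer to the front. Induction on the position of $0$ (the classical argument for shuffles with respect to a single letter, as in Reutenauer) then shows that the $(n-2)!$ polygons $[0,w',\infty]$ span ${\cal P}_n/I_n$. This is the dimension count the paper invokes: combined with surjectivity of $\phi$ and $\dim H^{n-3}(\Mod_{0,n})=(n-2)!$ it gives the isomorphism, and the $01$-polygons are then automatically a basis of the quotient because Theorem 1 makes their images $(n-2)!$ independent classes.
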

\begin{proof} (Sketch)
By the previous theorem, we have a natural surjective map 
\begin{equation}\label{map}{\cal P}_n\twoheadrightarrow
  H^{n-3}(\Mod_{0,n}) ,\end{equation}
which sends a polygon to its
associated cell form.  A calculation on rational functions shows that
$I_n$ is in the kernel of this map. A dimension count finishes the proof.
\end{proof}
We would like to study cohomology of interesting subspaces of $
H^{n-3}(\Mod_{0,n})$ such as the space of differential forms which
converge on the standard cell, $\delta$.  To do this
we make use
of the kernel $I_n$ to create a basis of convergent 01-forms and what
we refer to as {\it
  insertion forms}.  

Some 01-forms naturally converge on $\delta$. We define a {\it chord} on a
cell form, $\omega$, to be a set of marked points of a subsequence
on $\omega$ of the length between $2$ and $\lfloor
\frac{n}{2} \rfloor$.  The 01-forms which do not have any chords in
common with $\delta$ converge on $\delta$. 
However, some linear combinations of nonconvergent 01-forms converge
on $\delta$; a certain generating set of these are {\it insertion forms}.
Insertion forms are 
created according to a recursive procedure of inserting convergent
shuffles (those whose shuffle factors have no chords in common with $\delta$)
into convergent 01-forms.  For example,
\begin{align}
\omega &=[0,1,t_1,t_2,\infty, t_3]  +[0,1,t_2,t_1,\infty, t_3]\\
&=[0,1,t_1\sha t_2, \infty,t_3]\end{align}
is an insertion form obtained by inserting the convergent shuffle
$t_1\sha t_2$ into the convergent 01-form
$[0,1,s_1,\infty,s_2]$.  The shuffle factors are $t_1$ and $t_2$, and
are therefore
too short to contain any chords.

\begin{thm}
The insertion forms and the convergent 01-cell forms form a basis for
$H^{n-3}(\Mod_{0,n}^\delta)$.
\end{thm}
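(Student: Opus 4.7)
The plan is to verify three things: (a) that insertion forms and convergent 01-cell forms lie in $H^{n-3}(\Mod_{0,n}^\delta)$; (b) that this collection is linearly independent in $H^{n-3}(\Mod_{0,n})$; and (c) that it spans the subspace of convergent classes.

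Property (a) is immediate for convergent 01-cell forms. For insertion forms I would argue by induction on the recursive depth of the insertion. The base case is a convergent 01-cell form. For the inductive step, an insertion replaces one slot of an already-convergent 01-form by a shuffle $A\sha B$; locally near any boundary divisor of $\M_{0,n}$ that could contribute a new pole, the insertion pulls back to a shuffle sum on a smaller factor, which is a relation of the type $\sum_{W\in A\sha B}[W,\infty]\in I_m$ inside that factor and therefore has vanishing residue by Theorem~2 applied to the smaller moduli space.

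For (b), recall from Theorem~1 that the 01-cell forms are a $\Q$-basis of $H^{n-3}(\Mod_{0,n})$. Convergent 01-cell forms are themselves basis elements, while each insertion form is by construction a nonzero $\Q$-linear combination of non-convergent 01-cell forms (those sharing at least one chord with $\delta$). Hence no nontrivial combination of insertion forms can coincide with a combination of convergent 01-cell forms, and the problem reduces to independence among insertion forms, which I would extract from the uniqueness of the nested shuffle data used to build each one.

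For (c), take $[\omega]\in H^{n-3}(\Mod_{0,n}^\delta)$ and expand $\omega$ in the 01-cell basis as $\omega_c+\omega_d$, with $\omega_c$ gathering the convergent 01-cell form terms and $\omega_d$ the non-convergent ones. Since $\omega$ and $\omega_c$ both converge on $\delta$, so does $\omega_d$. It thus suffices to show that any convergent $\Q$-combination of non-convergent 01-cell forms is a combination of insertion forms. I would prove this by induction on the maximal length of a chord of $\delta$ shared by a term of $\omega_d$: the poles along the divisor attached to a longest shared chord can only cancel via a shuffle relation on the adjacent factor, and that cancellation is recorded precisely by a suitable insertion form; subtracting it strictly decreases the induction statistic.

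The main obstacle is (c), specifically the claim that every cancellation of poles among non-convergent 01-cell forms is forced by a shuffle relation on a residual factor of $\M_{0,n}$. This requires a careful analysis of the local model of each boundary divisor meeting $\delta$ in $\M_{0,n}$, combined with Theorem~2 applied in lower dimension to identify the local relations with $I_m$. Once this is in hand, the iterative insertion construction is visibly exhaustive, and the basis property follows.
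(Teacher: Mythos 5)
Your outline is consistent with the only information the report gives about the proof of this theorem, namely that it ``exploits the fact that $I_n$ is the kernel of the map $\mathcal{P}_n\twoheadrightarrow H^{n-3}(\Mod_{0,n})$'': you invoke the shuffle relations of Theorem~2 both to kill residues of insertion forms in (a) and to account for pole cancellations in (c). (The report itself contains no proof --- it defers entirely to reference [3] --- so the comparison can only be made against that one sentence.) Nevertheless, as written the argument has two genuine gaps, the second of which you partly acknowledge. In (b), the linear independence of the insertion forms among themselves does not follow from ``the uniqueness of the nested shuffle data'': distinct insertion data give distinct formal constructions, but the resulting linear combinations of non-convergent 01-cell forms could perfectly well be linearly dependent --- indeed the set of all possible insertions is highly redundant, which is why the report speaks of ``a certain generating set.'' Deciding which insertion forms to keep and proving their independence (for instance by a leading-term triangularity argument in the 01-basis, or by matching a known dimension formula for $H^{n-3}(\Mod_{0,n}^\delta)$) is a substantial part of the actual proof and is missing here.

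The spanning step (c) rests on the unproved claim that a cancellation of poles along the divisor attached to a maximal shared chord can only arise from a shuffle relation on the residual factor. To justify this you need, first, the computation of the residue of a cell form along a boundary divisor $d_A$ of $\M_{0,n}$ (it factors as a product of cell forms on the two components of $d_A$), and second, the identification of the kernel of this residue map, restricted to the span of the 01-forms containing the chord $A$, with the shuffle relations; this is a relative version of Theorem~2 on a product of two smaller moduli spaces, not merely Theorem~2 ``applied to the smaller moduli space.'' Moreover your induction statistic is not obviously monotone: subtracting the correcting insertion form can introduce new non-convergent 01-terms sharing \emph{other} chords with $\delta$, so one needs a carefully chosen order on chords, or a filtration by the set of shared chords, to guarantee termination. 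These are precisely the points the report describes as ``the heart'' of the work carried out in [3], and they cannot be waved through.
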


The proof of this theorem is the heart of our recent work and is given
in [3].  It exploits the fact that $I_n$ is the kernel
of the map \eqref{map}.

Now that we can explicitly describe the differential forms convergent
on $\delta$,
we can define an algebra of periods, since by a variable change, all
periods can be written as integrals of forms over $\delta$.  The
algebra of periods 
has three known sets of relations:
\begin{enumerate}
\item invariance under the symmetric group action corresponding to a
  variable change;
\item forms given by shuffles with respect to one point are
  identically 0;
\item product map relations coming from the pullback of maps on
moduli space (outlined in [2] and [3]).
\end{enumerate}

The product map relations also allow us to define a multiplication law
on periods.

We conjecture that these are the only relations on periods, but this
question seems difficult to prove.  A more strategic approach is to
define a formal algebra on polygon pairs satisfying these and only
these relations.  Since the algebra of periods is isomorphic to the
algebra of multizeta values ([2]), we conjecture that the formal
algebra of pairs of polygons, which we call $\cal{ 
FC}$, is isomorphic to the formal multizeta value algebra.  With this
association, we hope to approach some of the main conjectures on
formal multizetas such as Zagier's dimension conjecture.

\begin{conj}[Zagier]  Let ${\cal Z}_n$ be the vector space generated
  by weight $n$ multizeta values.  Then $d_n=\mathrm{dim}({\cal Z}_n)$
  is given by the recursive formula,
$$d_n=d_{n-2}+d_{n-3}.$$
\end{conj}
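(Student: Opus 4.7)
The plan is to attack Zagier's recursion by matched upper and lower bounds, using the polygon algebra $\mathcal{FC}$ on the upper side and Brown's motivic framework on the lower side, and then to close the loop by forcing agreement at every weight.

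For the upper bound $d_n \leq d_{n-2}+d_{n-3}$, I would work entirely inside $\mathcal{FC}$, where one has at one's disposal the three families of relations outlined in the talk: symmetric-group invariance, shuffles with respect to a single marked point (which generate $I_n$), and the product-map relations of [2,3]. Writing $\tilde d_n = \dim \mathcal{FC}_n$, the natural surjection $\mathcal{FC}_n \twoheadrightarrow \mathcal Z_n$ gives $d_n \le \tilde d_n$, so it is enough to bound $\tilde d_n$. Using the explicit basis of insertion forms and convergent $01$-cell forms for $H^{n-3}(\Mod_{0,n}^\delta)$, I would try to show that every element of $\mathcal{FC}_n$ reduces, modulo the product-map relations, to a $\mathbb Q$-combination of forms obtained from lower-weight generators by a single insertion of length $2$ or length $3$. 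If that combinatorial reduction succeeds, one reads off $\tilde d_n \le \tilde d_{n-2}+\tilde d_{n-3}$ directly, so that $d_n \le d_{n-2}+d_{n-3}$.

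For the lower bound $d_n \geq d_{n-2}+d_{n-3}$, I would pass to the motivic setting. Brown's theorem on mixed Tate motives over $\mathbb Z$ gives that the motivic $\{2,3\}$-words of weight $n$ span $\mathcal Z^{\mathfrak m}_n$, and the number of such words is precisely $d_{n-2}+d_{n-3}$ by construction. Applying the period map yields $d_{n-2}+d_{n-3}$ candidate multizetas $\zeta(w) \in \mathcal Z_n$; it remains to show these stay $\mathbb Q$-linearly independent. Combined with the upper bound above, $\mathbb Q$-independence at each weight forces equality $d_n = d_{n-2}+d_{n-3}$.

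The main obstacle is exactly this last injectivity of the period map $\mathcal Z^{\mathfrak m}_n \to \mathcal Z_n$. Proving $\mathbb Q$-independence of the numerical values $\zeta(w)$ for $\{2,3\}$-words is the restriction of the Grothendieck period conjecture to this subcategory of mixed Tate motives, and is wide open; no current technique rules out, for instance, a hypothetical spurious $\mathbb Q$-relation collapsing the weight-$8$ space. Within the framework of this paper, the speculative line would be to exploit the explicit pairing between cells and cell forms provided by $\mathcal{FC}$ to translate any alleged relation into a vanishing statement among integrals of insertion forms against $\delta$, and then to rule this out by a monodromy or motivic-Galois argument; but bridging this gap between formal and numerical independence is precisely what places Zagier's conjecture out of reach today, and I expect any genuine proof to require an essentially new transcendence input beyond what the polygon-algebra picture alone provides.
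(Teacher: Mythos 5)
There is no proof in the paper to compare yours against: the statement is Zagier's dimension conjecture, presented explicitly as a \emph{conjecture}, and the authors claim only that it holds in small weight for their formal algebra $\mathcal{FC}$, expressing the hope that the polygon-pair framework will eventually allow progress. So any complete ``proof attempt'' must contain a gap, and yours contains two. The one you flag honestly --- injectivity of the period map $\mathcal Z^{\mathfrak m}_n \to \mathcal Z_n$, i.e.\ $\Q$-linear independence of the values $\zeta(w)$ for $\{2,3\}$-words --- is indeed the fatal one: it is an instance of the Grothendieck period conjecture, and the state of transcendence theory is such that for no single $n\ge 5$ is even $d_n\ge 2$ currently provable. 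Your assessment of that half is accurate and well calibrated.

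The gap you underplay is in your upper bound. The ``combinatorial reduction'' inside $\mathcal{FC}$ that you say you ``would try to show'' --- that every element reduces, modulo the product-map relations, to insertions of length $2$ and $3$ applied to lower-weight generators --- is not a routine verification: it is precisely the formal side of the conjecture, which this paper verifies only in small weight and which remains open. Worse, the inequality it would yield points the wrong way for $\mathcal{FC}$: since the defining relations of $\mathcal{FC}$ are of geometric origin and hence expected to hold among motivic periods of $\M_{0,n}$, one expects a surjection $\mathcal{FC}_n \twoheadrightarrow \mathcal Z^{\mathfrak m}_n$, and Brown's theorem (which, note, postdates this 2007 report) then forces $\dim \mathcal{FC}_n \ge d_{n-2}+d_{n-3}$; whether equality holds, i.e.\ whether the shuffle and product-map relations suffice to cut the formal algebra down to the conjectured size, is exactly what is unknown. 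If you want an unconditional upper bound $d_n \le d_{n-2}+d_{n-3}$, you should route around $\mathcal{FC}$ entirely and invoke the theorem of Goncharov and Terasoma via the Deligne--Goncharov theory of mixed Tate motives over $\Z$; no derivation of that bound from the polygon algebra alone is known. With that substitution your sketch becomes an accurate map of the current state of the art --- known motivic upper bound, open transcendence lower bound --- but it is a research program, not a proof, and the paper never claims otherwise.
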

This conjecture is true in small weight for $\cal{FC}$ and we hope that
its combinatorial recursive definition will allow us to make progress
on this conjecture.

%\end{enumerate}

\end{talk}

\begin{thebibliography}{99}
%\begin{center}
%References
%\end{center}
%\begin{enumerate}

\bibitem  VV. I. Arnol'd, {\it The cohomology of the colored braid
    group}, Mat. Zameki, no. 5, 1969, pp. 227-231

\bibitem  FF. Brown, {\it Multiple zeta values and periods of moduli
    spaces} $\M_{0,n}(\R)$, PhD thesis, 2006, arXiv:math/0606419

\bibitem  FF. Brown, S. Carr et L. Schneps, {\it
    The algebra of cell-zeta values}, to appear in Compositio Math.,
arXiv:0910.0122

\bibitem  XX. Buff, J. Fehrenbach, P. Lochak and L. Schneps,
  {\it Espaces de modules des courbes, groupes modulaires et th{\'e}orie
    des champs}, Panoramas et Synth{\`e}ses, no. 7, SMF, 1999

\bibitem  HH. Furusho, {\it The multiple zeta value algebra and
    the stable derivation algebra}, Publications of the Research
  Institute for Mathematical Sciences (Kyoto), Vol. 39 (2003), p. 695

\bibitem  AA.B. Goncharov, {\it Galois symmetries of fundamental
    groupoids and noncommutative geometry}, arXiv:math.AG/0208144 v4,
  June 17, 2004

\bibitem  YY. Ihara, {\it On the stable derivation algebra associated
    with some braid groups}, Israel Journal of Mathematics, Vol. 80 (1992),
  pp. 135-153

\bibitem  MM. Kontsevich and D. Zagier, {\it Periods}, IHES/M/01/22

\bibitem  CC. Reutenauer, {\it Free Lie Algebras}, Oxford
  University Press, 1993
\end{thebibliography}
\end{document}